\theoremstyle{plain}
\newtheorem{theorem}{Theorem}
\newtheorem{lemma}[theorem]{Lemma}
\newtheorem{corollary}[theorem]{Corollary}
\newcommand{\Cf}{\textit{cf.}\;}
\newcommand{\gr}{\operatorname{gr}}
\newcommand{\grm}{\gr^m}
\renewcommand{\Im}{\operatorname{Im}}
\newcommand{\isomto}{\stackrel{\simeq}{\longrightarrow}}
\newcommand{\Ker}{\operatorname{Ker}}
\newcommand{\m}{\mathfrak{m}}
\renewcommand{\O}{\mathcal{O}}
\newcommand{\onto}[1]{\stackrel{#1}{\to}}
\newcommand{\OK}{\O_K}
\newcommand{\Z}{\mathbb{Z}}
\newcommand{\wt}[1]{\widetilde{#1}}
\newcommand{\Coker}{\operatorname{Coker}}
\newcommand{\grmKq}{\grm \Kq}
\newcommand{\grmkqn}{\grm\kqn}
\newcommand{\Kq}{K_q}
\newcommand{\KqM}{K_q^M}
\newcommand{\kqn}{k_{q,n}}
\newcommand{\KqMK}{\KqM(K)}
\newcommand{\UmKq}{U^m \Kq}
\newcommand{\UmppKq}{U^{m+1}\Kq}
\newcommand{\UmKqMK}{U^m \KqMK}
\newcommand{\Umkqn}{U^m\kqn}
\newcommand{\Umppkqn}{U^{m+1}\kqn}
\begin{document}
\title{Milnor $K$-groups modulo $p^n$ 
of a complete discrete valuation field}

\author{Toshiro Hiranouchi}

\maketitle
\begin{abstract}
For a mixed characteristic complete discrete valuation field 
$K$ which contains a $p^n$-th root of unity, 
we determine the graded quotients  
of the filtration on the Milnor $K$-groups $\KqMK$ modulo $p^n$ 
in terms of differential forms of the residue field of $K$.
\end{abstract}

In higher dimensional local class field theory of Kato and Parshin, 
the Galois group of an abelian extension field on 
a $q$-dimensional local field $K$ is described by 
the Milnor $K$-group $\KqMK$ for $q\ge 1$. 
The information on the ramification 
is related to the natural filtration $\UmKq := \UmKqMK$ 
which is by definition the subgroup generated by 
$\{1 + \m_K^m, K^{\times}, \ldots ,K^{\times}\}$, 
where $\m_K$ is the maximal ideal of the ring of integers $\OK$. 
So it is important to know the structure of the graded quotients 
$\grmKq := \UmKq/\UmppKq$. 
In this short note, 
we study the filtration on $\kqn := \KqMK/p^n$ 
the Milnor $K$-group modulo $p^n$  
induced by $\UmKq$. 
More precisely, 
for a mixed characteristic complete discrete valuation field 
$K$ 
Define the filtration 
$\Umkqn$ on $\kqn$, 
by the image of the filtration $\UmKq$ on $\kqn$. 
Our objective is to determine 
the structure of its graded quotient 
$\grmkqn := \Umkqn/\Umppkqn$ 
in terms of differential forms of the residue field 
under the assumption that 
$K$ contains a $p^n$-th root of unity $\zeta_{p^n}$ (Thm.\ 2). 

It should be mentioned that 
J.\ Nakamura described $\grmkqn$ 
after determining $\grmKq$ for all $m$ 
when $K$ is absolutely tamely ramified (\cite{Nak00b}, Cor.\ 1.2).
Although it is easy 
in the case of $q=1$, 
the structure of $\grmKq$ 
is still unknown in general.  
In particular, 
when $K$ has mixed characteristic 
and (absolutely) wildly ramification, 
it is known only some special cases 
(\cite{Kur04}, see also \cite{Nak00}). 
As mentioned in \cite{BK86}, Remark 6.8, 
such structure is closely related 
to the number of roots of unity 
of $p$-primary orders in $K$. 
In fact, 
Kurihara treated a wildly ramified 
field with $\zeta_p \not\in K$ in \cite{Kur04}. 
However, under the assumption $\zeta_{p^n} \in K$, 
the structure of $\grmkqn$ can be described 
by the one of $\grmKq$ only for lower $m$ 
which is known by Bloch-Kato \cite{BK86}. 
 
Let $K$ be a complete discrete valuation field of characteristic $0$, 
and $k$ its residue field of characteristic $p>0$. 
Let $e =v_K(p)$ be the absolute ramification index of $K$ and 
$e_0 := e/(p-1)$. 
For $m\ge 1$, 
let  $\UmKq$ be the subgroup of $\KqMK$ 
defined as above.
Put $U^0K_q = \KqMK$ and $\grmKq := \UmKq/\UmppKq$.
Let $\Omega_k^1 := \Omega^1_{k/\Z}$ be 
the module of absolute K\"ahler differentials  
and $\Omega_k^q$ the $q$-th exterior power of $\Omega^1_k$ over the residue field $k$. 
Define the subgroups $B_i^{q}$ and $Z_i^q$ for $i\ge 0$ of 
$\Omega_k^q$ such that 
$$
0 = B_0^q \subset B_1^q \subset \cdots \subset Z_1^q \subset Z_0^q = \Omega_k^q
$$
by the relations 
$B_1^q := \Im(d:\Omega_k^{q-1} \to \Omega_k^q)$, 
$Z_1^q := \Ker(d:\Omega_k^{q} \to \Omega_k^{q+1})$, 
$C^{-1}:B_i^{q} \isomto B_{i+1}^q/B_1^q$, and  
$C^{-1}:Z_i^q \isomto Z_{i+1}^q/B_1^q$, 
where $C^{-1}:\Omega_k^q \isomto Z_1^q/B_1^q$ 
is the inverse Cartier operator defined by 
\begin{equation}
\label{eq:iCartier}
  x\frac{dy_1}{y_1} \wedge \cdots \wedge \frac{d y_{q}}{{y_{q}}} 
    \mapsto x^p\frac{dy_1}{y_1} \wedge \cdots \wedge \frac{d y_{q}}{{y_{q}}}.
\end{equation}

We fix a prime element $\pi$ of $K$. 
For any $m$, we have a surjective homomorphism 
$\rho_m: \Omega_k^{q-1} \oplus \Omega_k^{q-2} \to \grmKq$ 
defined by 
\begin{align*}
  \left(x\frac{dy_1}{y_1} \wedge \cdots \wedge \frac{d y_{q-1}}{{y_{q-1}}}, 0\right) & \mapsto \{ 1 + \pi^m \wt{x}, \wt{y}_1, \ldots \wt{y}_{q-1}\},\\
  \left(0, x\frac{dy_1}{y_1} \wedge \cdots \wedge \frac{d y_{q-2}}{{y_{q-2}}}\right) & \mapsto \{ 1 + \pi^m \wt{x}, \wt{y}_1, \ldots \wt{y}_{q-2}, \pi\},
\end{align*}
where $\wt{x}$ and $\wt{y}_i$ are liftings of $x$ and $y_i$. 
For any $m \le e+ e_0$, 
the kernel of $\rho_m$ is written in terms of differential forms of $k$.  
Hence we obtain the structure of the graded quotient $\grmKq$ 
(\cite{BK86}, see also \cite{Nak00}) 
and also the one of $\grmkqn$ (\cite{BK86}, Rem.\ 4.8). 
Recall that the filtration 
$\Umkqn$ on $\kqn= \KqMK/p^n$ 
is defined by the image 
of the filtration $\UmKq$ on $\kqn$ 
and 
$\grmkqn := \Umkqn/\Umppkqn$. 
From the following lemma, 
we can investigate $\grmkqn$ for $m>e+e_0$ 
by its structure for $m \le e+e_0$. 

\begin{lemma}
\label{lem:m>pe0}
For $n>1$ and $m>e+e_0$, 
the multiplication by $p$ induces a surjective homomorphism
$p:U^{m-e}k_{q,n-1} \to U^m\kqn$.
If we further assume $\zeta_{p^n} \in K$, 
then the map $p$ is bijective. 
\end{lemma}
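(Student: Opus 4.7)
The plan separates the surjectivity of $p$ (which is an elementary $p$-th power calculation) from its injectivity (which needs the structure of $\KqMK[p]$).

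For surjectivity, write $p=c\pi^{e}$ with $c\in\OKtimes$. For $m>e+e_{0}$ one has $m-e>e_{0}$, and the binomial expansion
\begin{equation*}
(1+\pi^{m-e}u)^{p}\ =\ 1+cu\pi^{m}+\sum_{k=2}^{p-1}\binom{p}{k}(\pi^{m-e}u)^{k}+(\pi^{m-e}u)^{p}
\end{equation*}
has every non-leading term of $\pi$-valuation $>m$: the middle ones have valuation $\ge e+2(m-e)>m$ (since $m>e$) and the last has valuation $p(m-e)>m$ (since $m>e+e_{0}$). Reading this in $\Kq$ yields simultaneously well-definedness, $p\cdot U^{m-e}\Kq\subset\UmKq$, and the congruence
\begin{equation*}
p\{1+\pi^{m-e}u,\,a_{2},\ldots,a_{q}\}\ \equiv\ \{1+cu\pi^{m},\,a_{2},\ldots,a_{q}\}\pmod{\UmppKq}.
\end{equation*}
As $u$ varies over $\OK$ and $c\in\OKtimes$ is fixed, this captures every standard generator of $\UmKq$ modulo $\UmppKq$, giving $\UmKq\subset p\cdot U^{m-e}\Kq+\UmppKq$.

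Because the hypothesis $m'>e+e_{0}$ persists for all $m'\ge m$, the inclusion iterates: using $U^{m'-e}\Kq\subset U^{m-e}\Kq$ one gets $\UmKq\subset p\cdot U^{m-e}\Kq+U^{M}\Kq$ for every $M\ge m$. For $M>e_{0}+ne$, iterating the Hensel-type surjection $(-)^{p}\colon 1+\pi^{M'}\OK\twoheadrightarrow 1+\pi^{M'+e}\OK$ (valid whenever $M'>e_{0}$) makes every $1+\pi^{M}u$ a $p^{n}$-th power in $1+\pi^{M-ne}\OK$, so $U^{M}\Kq\subset p^{n}\Kq$. Combining, $\UmKq\subset p\cdot U^{m-e}\Kq+p^{n}\Kq$, which after reduction modulo $p^{n}\Kq$ is precisely the surjectivity of $p\colon U^{m-e}k_{q,n-1}\to\Umkqn$.

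For injectivity under $\zeta_{p^{n}}\in K$, suppose $\alpha\in U^{m-e}\Kq$ satisfies $p\alpha=p^{n}\beta$ in $\Kq$; then $\gamma:=\alpha-p^{n-1}\beta$ lies in $\KqMK[p]$. The conclusion $\alpha\in p^{n-1}\Kq$, i.e.\ $\alpha=0$ in $U^{m-e}k_{q,n-1}$, follows once one knows $\KqMK[p]\subset p^{n-1}\KqMK$. This last inclusion comes from combining the identification of $\KqMK[p]$ with symbols of the form $\{x_{1},\ldots,x_{q-1},\zeta_{p}\}$ (due to Bloch--Kato for complete discrete valuation fields, extending Merkurjev--Suslin) with the elementary identity $\{x_{1},\ldots,x_{q-1},\zeta_{p}\}=p^{n-1}\{x_{1},\ldots,x_{q-1},\zeta_{p^{n}}\}$, which uses only $\zeta_{p}=\zeta_{p^{n}}^{p^{n-1}}$.

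The main obstacle is the injectivity: the surjectivity is a mechanical Taylor-expansion together with elementary Hensel on $p$-th powers, whereas injectivity reduces to the non-trivial description of the $p$-torsion in $\KqMK$, and this is precisely where the hypothesis $\zeta_{p^{n}}\in K$ enters essentially.
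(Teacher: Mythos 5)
Your proof is correct and follows essentially the same route as the paper: surjectivity by the $p$-th power computation on principal units (which the paper simply delegates to the case $q=1$), and injectivity by identifying the $p$-torsion of $\KqMK$ with $\{\zeta_p\}K^M_{q-1}(K)$ and using $\{\zeta_p,y\}=p^{n-1}\{\zeta_{p^n},y\}$. The only difference is that you spell out the binomial-expansion and Hensel details that the paper leaves implicit.
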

\begin{proof}
  The surjectivity of $p:U^{m-e}k_{q,n-1} \to U^m\kqn$ 
  follows from the 
  surjectivity of $p: U^{m-e}k_{1,n-1} \to U^mk_{1,n}$. 
  To show the injectivity, 
  for $x \in U^{m-e}K_q$,  
  assume that $px = p^nx'$ is in $p^n \KqMK \cap \UmKq$ 
  for some $x'\in \KqMK$. 
  Thus $x - p^{n-1}x'$ 
  is in the kernel of the multiplication by $p$ on $\KqMK$. 
  It is known that 
  its kernel is $= \{\zeta_p\}K_{q-1}^M(K)$, where $\zeta_p$ 
  is a primitive $p$-th root of unity.  
  This fact 
  is a byproduct of the Milnor-Bloch-Kato conjecture 
  (due to Suslin, \Cf \cite{Izh00}, Sect.\ 2.4), 
  now is a theorem of Voevodsky, Rost, and Weibel 
  (\cite{Wei09}).  
  Hence, for any $y\in K_{q-1}^M(K)$, we have 
  $\{\zeta_{p},y\} = p^{n-1}\{\zeta_{p^n},y\}$ and thus 
    $x \in p^{n-1}\KqMK$.  
\end{proof}

We determine $\grmkqn$ for any $m$ and $n$ when $\zeta_{p^n}$ is in $K$. 
It is known also $U^mk_{q,1} = 0$  for $m > e+e_0$ 
(\cite{BK86}, Lem.\ 5.1 (i)). 
So we may assume $m> e + e_0$ and $n>1$.
For such $m$, we have an isomorphism
$\gr^{m-e}k_{q,n-1} \onto{p} \grmkqn$ 
from the above lemma. 
By induction on $n$, 
we obtain the following:

\begin{theorem}
\label{thm:A.main}
We assume $\zeta_{p^n} \in K$. 
Let $m$ and $n$ be positive integers 
and $s$ the integer such that $m = p^sm'$, $(m',p)  = 1$. 
Put $c_i := ie + e_0$ for $i \ge 1$ and $c_0: = 0$.

\noindent
$\mathrm{(i)}$ 
If $c_i <  m < c_{i+1}$ for some $0 \le i < n$, 
then $\grmkqn$ is isomorphic to 
$$
\begin{cases}
\Coker(\Omega_k^{q-2}\!\! \onto{\theta}\! \Omega_k^{q-1}/B_s^{q-1} \oplus \Omega_k^{q-2}/B_s^{q-2}), n-i > s,\\
\Omega_k^{q-1}/Z_{n-i}^{q-1} \oplus \Omega_k^{q-2}/Z_{n-i}^{q-2},\ n-i \le s, 
\end{cases}
$$
where $\theta$ is defined by  
$\omega \mapsto (C^{-s}d\omega, (-1)^q (m-ie)/p^s C^{-s}\omega)$.

\noindent
$\mathrm{(ii)}$ 
If $m = c_i$ for some $0 < i \le n$, 
then $\gr^{ie+e_0}\kqn$ is isomorphic to 
$$
   (\Omega_k^{q-1}/(1+aC)Z^{q-1}_{n-i}) \oplus (\Omega_k^{q-2}/(1+aC)Z_{n-i}^{q-2}),
$$
where $C$ is the Cartier operator defined by 
$$
  x^p \frac{dy_1}{y_1} \wedge \cdots \wedge \frac{d y_{q-1}}{{y_{q-1}}} 
    \mapsto x\frac{dy_1}{y_1} \wedge \cdots \wedge \frac{d y_{q-1}}{{y_{q-1}}} 
$$
and $a$ is the residue class of $p\pi^{-e}$.

\noindent
$\mathrm{(iii)}$ 
If $m>c_n$, then $\Umkqn = 0$. 
\end{theorem}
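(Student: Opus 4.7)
The plan is to induct on $n$, using Lemma~\ref{lem:m>pe0} to step down from $n$ to $n-1$ whenever $m > c_1 := e + e_0$, and invoking the Bloch--Kato description (\cite{BK86}, Rem.\ 4.8) as the base input for $m \le c_1$. For $m \le c_1$, BK applied directly to $k_{q,n}$ already supplies the formulas in case (i) with $i=0$ and case (ii) with $i=1$; together with the vanishing $U^m k_{q,1} = 0$ for $m > c_1$ recalled in the text, this handles the $n=1$ base.

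For $n > 1$ and $m > c_1$, the lemma gives $\gr^{m-e} k_{q,n-1} \isomto \grm k_{q,n}$. Iterating: case (i) with $c_i < m < c_{i+1}$ and $i < n$ reduces after $i$ applications to $\gr^{m-ie} k_{q, n-i}$ with $0 < m - ie < c_1$; case (ii) with $m = c_i$ and $i \le n$ reduces after $i-1$ applications to $\gr^{c_1} k_{q, n-i+1}$; case (iii) with $m > c_n$ reduces after $n-1$ applications to $\gr^{m-(n-1)e} k_{q,1}$ with exponent $> c_1$, hence zero. In each non-vanishing case the target lies in the BK range. To translate the BK formula at the reduced parameters into the statement at the original $(m,n,i)$, I would make the isomorphism of Lemma~\ref{lem:m>pe0} explicit on generators: for $m > c_1$, expanding $(1+\pi^{m-e}\wt x)^p$ and using $v_K(p)=e$ shows that only the $pa$-term survives modulo higher-order units, so $p \cdot \{1 + \pi^{m-e}\wt x,\wt y_1,\ldots\} = \{1 + u\pi^m \wt x,\wt y_1,\ldots\}$ with $u = p\pi^{-e}$ a unit of residue $a$. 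Thus multiplication by $p$ corresponds to multiplication by $a$ on the differential-form side; iterating $i$ times and combining with BK at the base (which contributes $C^{-s}$, the $B_s$/$Z_{n-i}$ denominators, and in case (ii) the $1 + aC$ twist at $c_1$) should produce the stated formulas, with the scalar $(m-ie)/p^s$ in $\theta$ arising from differentiating $1 + \pi^{c_i}\wt x$-type expressions.

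The main obstacle will be the bookkeeping that relates $v_p(m-ie)$, which governs BK's dichotomy at the base step, to the $s = v_p(m)$ and the dichotomy $n - i > s$ versus $n - i \le s$ in the statement, after tracking the Cartier twists produced along the iteration. The somewhat unusual formulation in terms of $B_s$, $Z_{n-i}$, and the scalar $(m-ie)/p^s$ appears to be precisely what is needed to keep these two viewpoints consistent.
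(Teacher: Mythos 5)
Your proposal is correct and follows essentially the same route as the paper: the paper's (very terse) argument is exactly to handle $m \le e+e_0$ by the Bloch--Kato description, to kill $U^mk_{q,1}$ for $m > e+e_0$ by \cite{BK86}, Lem.~5.1~(i), and then to induct on $n$ using the isomorphism $\gr^{m-e}k_{q,n-1}\isomto \grmkqn$ of Lemma~\ref{lem:m>pe0}. Your additional explicit computation of multiplication by $p$ on generators (the unit $p\pi^{-e}$ with residue $a$) and the attendant bookkeeping is precisely the content the paper leaves implicit.
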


\begin{corollary}
If $k$ is separably closed 
{\rm (}we may not assume $\zeta_{p^n} \in K${\rm )}, then 
$\gr^{ie+e_0}\kqn =0$ for $i\ge 1$. 
\end{corollary}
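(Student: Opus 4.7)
The plan is to reduce the vanishing to the single breakpoint $m=c_1$ using Lemma~\ref{lem:m>pe0}, then invoke Bloch--Kato's description of $\gr^{c_1}\kqn$, which takes the form of an Artin--Schreier cokernel vanishing over a separably closed residue field.

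First, I dispose of the range $i>n$. By \cite{BK86}, Lem.~5.1(i), $U^m k_{q,1}=0$ for $m>c_1$, with no root-of-unity hypothesis. For $n>1$, the surjection $p\colon U^{c_i-e}k_{q,n-1}\twoheadrightarrow U^{c_i}\kqn$ from Lemma~\ref{lem:m>pe0} (applicable since $c_i>c_1$), iterated $(n-1)$ times, reduces the claim to $U^{c_i-(n-1)e}k_{q,1}=U^{(i-n+1)e+e_0}k_{q,1}$; this vanishes precisely because $(i-n+1)e+e_0>c_1$ when $i>n$. Hence $\gr^{c_i}\kqn=0$ for $i>n$.

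Second, for $1\le i\le n$, the surjection in Lemma~\ref{lem:m>pe0} applies at both indices $c_j$ and $c_j+1$ whenever $j\ge 2$, and a snake-lemma diagram chase transfers the surjectivity to graded pieces: $p\colon \gr^{c_{j-1}}k_{q,N-1}\twoheadrightarrow\gr^{c_j}k_{q,N}$ for $j\ge 2$. Composing these for $j=2,\ldots,i$ (with $N$ descending from $n$ to $n-i+2$) yields
\[
p^{\,i-1}\colon \gr^{c_1}k_{q,n-i+1}\twoheadrightarrow\gr^{c_i}\kqn.
\]
Thus the corollary reduces to showing $\gr^{c_1}k_{q,n'}=0$ for every $n'\ge 1$, when $k$ is separably closed.

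Third, this last vanishing is the content of Bloch--Kato's Remark~4.8 of \cite{BK86}: $\gr^{c_1}k_{q,n'}$ is described, without assuming $\zeta_{p^{n'}}\in K$, as the cokernel of the Artin--Schreier-type operator $1+aC$ acting on a suitable subgroup of $\Omega_k^{q-1}\oplus\Omega_k^{q-2}$, where $a$ is the residue class of $p\pi^{-e}$. The underlying relation is the expansion $(1+\pi^{e_0}x)^p\equiv 1+\pi^{c_1}(ax+x^p)\pmod{U^{c_1+1}}$. Over a separably closed residue field, the Artin--Schreier-type equation $v^p+av=x$ is solvable for any $x$, so $1+aC$ is surjective and the cokernel vanishes.

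The principal obstacle is the third step: extracting Bloch--Kato's Remark~4.8 in the generality required (arbitrary $n'$, no root-of-unity hypothesis) and confirming that the relevant cokernel has the Artin--Schreier form used here. The remaining ingredients---the snake-lemma reduction and the solvability of Artin--Schreier equations over separably closed fields---are essentially formal.
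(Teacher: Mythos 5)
Your reductions in the first two steps are correct and are, as far as one can tell, exactly what the paper's one-line proof intends: for $j\ge 2$ the surjections of Lemma~\ref{lem:m>pe0} at the two levels $c_j$ and $c_j+1$ (both $>e+e_0$) induce surjections $\gr^{c_{j-1}}k_{q,N-1}\twoheadrightarrow \gr^{c_j}k_{q,N}$, and iterating these, together with $U^mk_{q,1}=0$ for $m>e+e_0$, settles every case that can be pushed all the way down to $n'=1$. The difference is that you honestly isolate the remaining input, namely $\gr^{e+e_0}k_{q,n'}=0$ for all $n'\ge 1$, whereas the paper cites only the case $n'=1$ (\cite{BK86}, Lem.~5.1(ii)).

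The gap is precisely your third step, and it is not just a sourcing problem: the claimed vanishing $\gr^{e+e_0}k_{q,n'}=0$ for $n'\ge 2$ without a root-of-unity hypothesis fails. The relation $(1+\pi^{e_0}x)^p\equiv 1+\pi^{e+e_0}(x^p+ax)$ shows that the symbol $\{1+\pi^{e+e_0}(x^p+ax),\dots\}$ equals $p\{1+\pi^{e_0}x,\dots\}$, which is killed in $k_{q,1}$ but \emph{not} in $k_{q,n'}$ for $n'\ge 2$; this is why in Theorem~\ref{thm:A.main}(ii) the operator $1+aC$ is applied only to $Z^{q-1}_{n-i}$ rather than to all of $\Omega^{q-1}_k$, and even that description presupposes $\zeta_{p^n}\in K$. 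Concretely, let $K$ be the completion of the maximal unramified extension of $\Qp(\zeta_p)$, so $k=\overline{\mathbb{F}}_p$, $e=p-1$, $e_0=1$, $e+e_0=p$, and take $q=1$, $n'=2$. Choose $c\in\OKtimes$ with $\bar c^{\,p}+\bar a\bar c\neq 0$ (possible since $k$ is infinite); then $(1+c\pi)^p\in U^p_K\ssm U^{p+1}_K$. On the other hand, if $u=vw^{p^2}$ with $u\in U^p_K$ and $v\in U^{p+1}_K$, then $w^{p^2}\in U^p_K$ forces $w$ to be a one-unit (its Teichm\"uller part must be trivial), hence $w^{p^2}\in (U^1_K)^{p^2}\subseteq U^{2p-1}_K\subseteq U^{p+1}_K$ and $u\in U^{p+1}_K$. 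Therefore $\gr^{p}k_{1,2}\cong U^p_K/U^{p+1}_K\cong k\neq 0$, contradicting the asserted vanishing at $i=1$, $n=2$. So your argument establishes the corollary only for $i\ge n$ (where the descent reaches $k_{q,1}$); for $1\le i<n$ it gets stuck at $\gr^{e+e_0}k_{q,n-i+1}$, and the same appears to be true of the paper's own proof, whose cited ingredients ($\gr^{e+e_0}k_{q,1}=0$, Lemma~\ref{lem:m>pe0} for $m>e+e_0$, induction on $n$) never reach the level $e+e_0$ for $n'\ge 2$.
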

\begin{proof}
The assertion follows from $\gr^{e+e_0}k_{q,1} = 0$ (\cite{BK86}, Lem.\ 5.1 (ii)), 
Lemma \ref{lem:m>pe0}, 
and the induction on $n$. 
\end{proof}

\medskip\noindent
{\it Acknowledgments.} 
The author thanks 
Masato Kurihara for his comments 
on the results in this note.

\providecommand{\bysame}{\leavevmode\hbox to3em{\hrulefill}\thinspace}
\providecommand{\href}[2]{#2}

\noindent
Toshiro Hiranouchi \\
Department of Mathematics, Graduate School of Science, Hiroshima University\\
1-3-1 Kagamiyama, Higashi-Hiroshima, 739-8526 Japan\\
Email address: {\tt hira@hiroshima-u.ac.jp}

\end{document}